\newtheorem{theorem}{Theorem}[section]
\newtheorem{lemma}[theorem]{Lemma}
\theoremstyle{definition}
\newtheorem{definition}[theorem]{Definition}
\newtheorem{example}[theorem]{Example}
\newtheorem{remark}[theorem]{Remark}
\numberwithin{equation}{section}
\title[Well-posedness for the split equilibrium problem]{Well-posedness for the split equilibrium problem}
\author[S. Dey]{Soumitra Dey$^*$}\thanks{$^*$Correspondence authors.}
\address[S. Dey]{Department of Mathematics, India Institute of Technology Madras, Chennai, India}
\email{\tt deysoumitra2012@gmail.com}
\address[S. Dey]{ Department of Mathematics, The Technion--Israel Institute of Technology, Haifa, 3200003, Israel}
\author{V. Vetrivel}
\address[V. Vetrivel]{Department of Mathematics, India Institute of Technology Madras, Chennai, India}
\email{\tt vetri@iitm.ac.in}
\author[H. K. Xu]{Hong-Kun Xu$^*$}
\address[H. K. Xu]{Department of Mathematics, School of Science, Hangzhou Dianzi University,
     Hangzhou, 310018, China}
\email{\tt xuhk@hdu.edu.cn}
\address[H. K. Xu]{College of Mathematics and Information Science,  Henan Normal University,
     Xinxiang, 453007, China}
\keywords{Approximate sequence, Variational inequality, Split variational inequality, Equilibrium problem, Split equilibrium problem, Well-posedness} 
\subjclass[2020]{Primary 49K40, 49J40, 47H09; Secondary 47H10, 47J20}
\begin{document}

\begin{abstract}
We extend the concept of well-posedness to the split equilibrium problem and establish Furi-Vignoli-type characterizations for the well-posedness. We prove that the well-posedness of the split equilibrium problem is equivalent to the existence and uniqueness of its solution under certain assumptions on the bifunctions involved. We also characterize the generalized well-posedness of the split equilibrium problem via the Kuratowski measure of noncompactness. We illustrate our theoretical results by several examples.
\end{abstract}

\maketitle

\section{Introduction}
\label{Sec:1}
The concept of well-posedness is one of the most important and interesting subjects in nonlinear analysis to study many optimization problems such as variational inequalities (VIs), split VIs, equilibrium problems, inverse problems. The notion of well-posedness for a minimization problem was first introduced and widely studied by Tykhonov \cite{ANTY1966} in a metric space. A minimization problem is said to be well-posed (or correctly posed) if it has a unique optimal solution, and every minimizing sequence converges to the unique solution (see, for instance, \cite{MFUR1970}). Since the requirement of the uniqueness of the solution may be too restrictive when dealing with applications to some optimal controls or  mathematical programming problems, the above definition of well-posedness was relaxed by Furi and Vignoli \cite{MFUR1970} for unconstrained minimization problems. Lucchetti and Patrone in \cite{RLUC1981, RLUC1982} extended the well-posedness concept to VIs. It is important to mention that their well-posedness is the first notion of well-posedness for VIs. Later on, the concept of well-posedness was extended by many authors (see, \cite{MBLI2000,TZOL1995,TZOL1996,MBLI2006,YPFA2008,YPFA2010,XXHU2009,CSLA2012,XBLI2013,MMWO2013,YBXI2011}).

Let $C$ be a nonempty closed convex subset of a real Hilbert space $H$ and let $g:H\rightarrow H$ be a mapping. Then VI is to find a point $x^*\in C$ with the following property:
\begin{align}\label{VI}
\left\langle g(x^*), p-x^*\right\rangle\geq 0, \quad\forall p\in C.
\end{align}

VI is an useful tool for solving many problems such as systems of linear and nonlinear equations, complementarity problems, and saddle point problems (see, for more applications, \cite{FFAC2003}). VI was first introduced and studied by Stampacchia \cite{GSTA1964} in the finite-dimensional Euclidean space. In 2003, Noor \cite{MDNO2003} introduced a concept of well-posedness for a new class of VIs and established some results under the pseudo-monotonicity assumption on the mapping involved. Thereafter many researchers have put their attention to VI (\ref{VI}) by generalizing in various directions (see, for instance, \cite{FFAC2003, QLDO2017}) for both finite- and infinite-dimensional settings.

One of the most important generalizations to VI is the split variational inequality (SVI). Let $H_1$ and $H_2$ be two real Hilbert spaces and let $C$ and $Q$ be two non-empty closed convex subsets of $H_1$ and $H_2$, respectively. The SVI is  to find a point $x^*\in C$ with the properties
\begin{align}\label{SVIP}
\left\langle f(x^*), p-x^*\right\rangle\geq 0, \quad\forall p\in C
\end{align}
and
\begin{align}
y^*=Ax^*\in Q\quad\text{solves}\quad\left\langle g(y^*), q-y^*\right\rangle\geq 0, \quad\forall q\in Q,\nonumber
\end{align}
where $f:H_1\rightarrow H_1$ and $g:H_2\rightarrow H_2$ are two mappings and $A:H_1\rightarrow H_2$ is a bounded linear operator. For more details on SVI, one is referred to \cite{YCEN2010, QHAN2014} and the references therein. Recently, Hu, et. al. \cite{RHUA2014} introduced and studied the well-posedness of the SVI (\ref{SVIP}) in reflexive Banach spaces. 

Another important generalization to VI is known as equilibrium problem (EP).
Let $C$ be a non-empty closed convex subset of a real Hilbert space $H$. Let $f:C\times C\rightarrow\mathbb{R}$ be a bifunction.
Then the EP consists of finding an $x^*\in C$ such that
\begin{align}\label{EP}
f(x^*, p)\geq 0,\quad\forall p\in C.
\end{align}

EPs have received the attention of many researchers in the recent years 
due to their important special cases such as optimization problems \cite{TZOL1996}, saddle point problems \cite{MBIA2010}, VIs \cite{GSTA1964, EBLU1994} and Nash EPs \cite{FFAC2003}, minimax problems. 
These problems are useful models of many practical problems arising in game theory, physics, economics, etc. Another important part of EPs is that they consider all the special cases in a unified form. Many extensions of EPs can be found \cite{LESL2019,BALL2018,MBIA2007} and the references therein.

Motivated by the above-mentioned well-posedness of VIs and other related problems,  Bianchi, et. al. \cite{MBIA2010} introduced some concepts of well-posedness for scalar EPs in complete metric spaces or in Banach spaces.

Similarly, another important generalization to EP $(\ref{EP})$ is known as the split equilibrium problem (SEP). Let $H_1$ and $H_2$ be two real Hilbert spaces and let $C$ and $Q$ be two non-empty closed convex subsets of $H_1$ and $H_2$, respectively. An SEP is to find an $x^*\in C$ such that
\begin{align}\label{SEP}
 f(x^*, p)\geq 0, \quad\forall p\in C
\end{align}
and
\begin{align}
y^*=Ax^*\in Q\quad\text{solves}\quad g(y^*, q)\geq 0, \quad\forall q\in Q,\nonumber
\end{align}
where $f:C\times C\rightarrow\mathbb{R}$ and $g:Q\times Q\rightarrow\mathbb{R}$ are two bifunctions, and $A:H_1\rightarrow H_2$ is a bounded linear operator. It is easy to show that under certain constraint qualifications SEP reduces to SVI. 

Motivated and inspired by the concept of well-posedness to variational problems in recent years, in this paper we shall introduce the concept of the well-posedness to the SEP (\ref{SEP}).

This paper is organized as follows. In section \ref{Sec:2} we collect some basic definitions and results. In section \ref{Sec:3} we introduce the concepts of an approximating sequence for the SEP (\ref{SEP}) and we introduce various kinds of well-posedness for the SEP (\ref{SEP}). Also, we establish the metric characterization of well-posedness and generalized well-posedness for the SEP (\ref{SEP}). In this section we have also provided some nontrivial examples to illustrate our theoretical analysis. In section \ref{Sec:4} we prove that the well-posedness of the SEP is equivalent to the existence and uniqueness of its solution. In section \ref{Sec:5} we draw a conclusion of our work.

\section{Preliminaries}
\label{Sec:2}
In this section we collect some basic definitions and results which we will use in our analysis. We refer \cite{KKUR1968, EKLE1984,HHBA2011,VRAK1998,EBLU1994} for more details.

Let $A$ and $B$ be two nonempty subsets of a real Hilbert space $H$. The Hausdorff metric $H(\cdot, \cdot)$ between $A$ and $B$ is defined by
\begin{align*}
H(A, B)=\max\left\lbrace D(A, B), D(B, A)\right\rbrace,
\end{align*}
where $D(A, B)= \sup_{a\in A}d(a, B)$ with $d(a, B)=\inf_{b\in B}\|a-b\|$.
Let $\left\lbrace A_n\right\rbrace$ be a sequence of subsets of $H.$ We say that $A_n$ converges to $A$ if and only if $H(A_n, A)\rightarrow 0.$ It is easy to see that $D(A_n, A)\rightarrow 0$ if and only if $d(a_n, A)\rightarrow 0$ uniformly for all selection $a_n\in A_n$.

The diameter of a set $B$ is denoted by $\text{diam}(B)$ and is defined by
\begin{align*}
\text{diam}(B)=\sup \left\lbrace \|x-y\|: x, y\in B\right\rbrace.
\end{align*}

\begin{definition}
Let $C$ be a nonempty subset of a real Hilbert space $H$. A function $f:C\rightarrow\mathbb{R}$ is said to be {\em lower semi-continuous} (l.s.c) at a point $x$ in $C$ if for every sequence $\left\lbrace x_n\right\rbrace$ in $C$ with $x_n\rightarrow x$ it follows that
\begin{align*}
f(x)\leq\liminf_{n\rightarrow\infty} f(x_n).
\end{align*}
\end{definition}

\begin{definition}
Let $C$ be a nonempty subset of a real Hilbert space $H$. A function $f:C\rightarrow\mathbb{R}$ is said to be {\em upper semi-continuous} (u.s.c) at a point $x$ in $C$ if for every sequence $\left\lbrace x_n\right\rbrace$ in $C$ with $x_n\rightarrow x$ it follows that
\begin{align*}
\limsup_{n\rightarrow\infty} f(x_n)\leq f(x).
\end{align*}
\end{definition}

\begin{definition}
Let $G$ be a nonempty subset of a real Hilbert space $H$. The {\em Kuratowski measure of noncompactness} of the set $G$ is defined by
\begin{align}
\alpha(G)=\inf\left\lbrace\epsilon>0: G\subset\cup_{i=1}^n G_i,  \text{diam}(G_i)<\epsilon,~ i=1,2,\cdots, n,~
n\in\mathbb{N} \right\rbrace\nonumber.
\end{align}
\end{definition}

\begin{lemma}
Let $P$ and $Q$ be two nonempty subsets of a real Hilbert space $H$.  Then
\begin{align*}
\alpha(P)\leq 2 H(P, Q)+\alpha(Q),
\end{align*}
where $\alpha$ is the Kuratowski measure of noncompactness.
\end{lemma}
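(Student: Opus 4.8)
The plan is to manufacture an economical finite cover of $P$ by transporting a near-optimal cover of $Q$ across the space via the Hausdorff estimate, and then to control the mesh of the new cover with the triangle inequality. No monotonicity, convexity, or topological input is needed; the statement is purely metric.

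First I would fix two auxiliary slack parameters $\eta>0$ and $\beta>0$. Invoking the definition of the Kuratowski measure applied to $Q$, I would choose a finite cover $Q\subset\bigcup_{i=1}^n Q_i$ with $\text{diam}(Q_i)<\alpha(Q)+\eta$ for each $i$. Then, since $D(P,Q)\le H(P,Q)$, every $p\in P$ satisfies $d(p,Q)\le H(P,Q)$, so by the definition of the infimum I can select a point $q_p\in Q$ with $\|p-q_p\|<H(P,Q)+\beta$, and record an index $i(p)$ with $q_p\in Q_{i(p)}$.

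Next I would set $P_i=\{p\in P:i(p)=i\}$, which produces a finite cover $P=\bigcup_{i=1}^n P_i$. For any $p,p'\in P_i$ the chosen points $q_p,q_{p'}$ both lie in $Q_i$, so the triangle inequality gives
\begin{align*}
\|p-p'\|\le\|p-q_p\|+\|q_p-q_{p'}\|+\|q_{p'}-p'\|<2\big(H(P,Q)+\beta\big)+\big(\alpha(Q)+\eta\big).
\end{align*}
Hence $\text{diam}(P_i)\le 2H(P,Q)+\alpha(Q)+2\beta+\eta$ for every $i$, and the definition of $\alpha$ applied to this cover of $P$ yields $\alpha(P)\le 2H(P,Q)+\alpha(Q)+2\beta+\eta$. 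Since $\beta$ and $\eta$ were arbitrary, letting them tend to $0$ gives the claimed inequality $\alpha(P)\le 2H(P,Q)+\alpha(Q)$.

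The only delicate point—and it is mild—is the bookkeeping between strict and non-strict inequalities arising from the two infima (the one defining $d(p,Q)$ and the one defining $\alpha$). This is precisely why I introduce the slack parameters $\beta,\eta$ at the outset rather than trying to work with exact minimizers, and why the conclusion follows only after passing to the limit. Everything else is a direct three-term triangle estimate.
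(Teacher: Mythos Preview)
Your proof is correct. The paper does not supply its own proof of this lemma; it is stated in the preliminaries as a known estimate on the Kuratowski measure (with a reference to Rako\v{c}evi\'c and Kuratowski for background), so there is nothing to compare against. Your argument---transport a near-optimal finite cover of $Q$ to $P$ via a Hausdorff-close selection and bound the mesh by a three-term triangle inequality---is exactly the standard proof of this inequality, and your handling of the two slack parameters $\beta,\eta$ cleanly disposes of the strict-versus-nonstrict bookkeeping you flag at the end. One small remark: you may want to note explicitly that the cases $\alpha(Q)=\infty$ or $H(P,Q)=\infty$ are trivial, so that the choice of a finite cover in your first step and the selection of $q_p$ in your second step are justified.
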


\begin{definition}
Let $K$ be a nonempty subset of real Hilbert space $H$. A bifunction $f:K\times K\rightarrow\mathbb{R}$ is said to be {\em monotone} if
\begin{align}
f(x, y)+f(y, x)\leq 0,\quad\forall x, y\in K.\nonumber
\end{align}
\end{definition}

\begin{definition}
Let $K$ be a nonempty convex subset of real Hilbert space $H$. A bifunction $f:K\times K\rightarrow\mathbb{R}$ is said to be {\em hemicontinuous} if
\begin{align}
\limsup_{t\rightarrow 0^+}f(x+t(y-x), y)\leq f(x, y),\quad\forall x, y\in K.\nonumber
\end{align}
\end{definition}

\begin{theorem}\label{Lem:1}
Let $K$ be convex and let $h: K\times K\rightarrow\mathbb{R}$ be a monotone and hemicontinuous bifunction. Assume that\\
$(i)~h(x, x)\geq 0$ for all $x\in K$.\\
$(ii)$ for every $x\in K$, $h(x,\cdot)$ is convex.\\ Then for given $x^*\in K,$
\begin{align*}
h(x^*, y)\geq 0, \quad\forall y\in K
\end{align*}
if and only if
\begin{align*}
h(y, x^*)\leq 0, \quad\forall y\in K.
\end{align*}
\end{theorem}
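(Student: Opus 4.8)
The plan is to treat the two implications separately, observing that one direction follows immediately from monotonicity while the other requires the full combination of convexity, the sign condition (i), and hemicontinuity. I would present the easy direction first as a warm-up and reserve the bulk of the argument for the converse.

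For the forward implication I would assume $h(x^*, y) \geq 0$ for all $y \in K$ and simply invoke monotonicity. Since $h(x^*, y) + h(y, x^*) \leq 0$ holds for every $y \in K$, rearranging gives $h(y, x^*) \leq -h(x^*, y) \leq 0$ for all $y \in K$. This step uses nothing beyond the monotonicity hypothesis.

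For the converse, suppose $h(y, x^*) \leq 0$ for all $y \in K$ and fix an arbitrary $y \in K$. Exploiting convexity of $K$, I would introduce the segment point $y_t := (1-t)x^* + t y = x^* + t(y - x^*) \in K$ for $t \in (0,1]$, and apply the hypothesis at $y_t$ to get $h(y_t, x^*) \leq 0$. The decisive step is to combine the convexity of $h(y_t,\cdot)$ from (ii) with the sign condition (i): expanding $y_t$ as a convex combination inside the second slot yields
\[
0 \leq h(y_t, y_t) \leq (1-t)\,h(y_t, x^*) + t\,h(y_t, y),
\]
where the leftmost inequality is (i). Since $(1-t)\,h(y_t, x^*) \leq 0$, the remaining term must compensate, and dividing by $t>0$ forces $h(y_t, y) \geq 0$ for every $t \in (0,1]$.

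The last step, which I expect to be the main (if mild) obstacle, is passing to the limit $t \to 0^+$, and hemicontinuity is precisely the tool for this. Because $h(y_t, y) = h(x^* + t(y - x^*), y) \geq 0$ for all small $t > 0$, taking the limit superior and applying the hemicontinuity inequality gives
\[
0 \leq \limsup_{t \to 0^+} h\big(x^* + t(y - x^*), y\big) \leq h(x^*, y).
\]
Since $y \in K$ was arbitrary, this establishes $h(x^*, y) \geq 0$ for all $y \in K$, completing the equivalence. The one subtlety worth flagging is the \emph{direction} of the hemicontinuity estimate: it bounds the $\limsup$ above by $h(x^*, y)$, which is exactly what is needed to transport the nonnegativity of $h(y_t, y)$ across the limit to $h(x^*, y)$.
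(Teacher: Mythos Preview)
Your argument is correct and is precisely the standard Minty-type proof for equilibrium problems. Note, however, that the paper does not supply its own proof of this statement: it is listed in the Preliminaries section as a known result (in the spirit of Blum--Oettli), so there is no in-paper proof to compare against. Your write-up would serve as a complete and self-contained justification where the paper simply cites the result.
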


\section{Approximate Sequences, Metric Characterizations and Well-Posedness}
\label{Sec:3}

In this section, we introduce an approximate sequence to the SEP and extend the well-posedness notions to the SEP.
We also derive metric characterizations of the well-posedness.
We always assume that $C$ and $Q$ are two nonempty closed convex subsets of real Hilbert spaces $H_1$ and $H_2$, respectively,
and $A: H_1\to H_2$ is a bounded linear operator.

\begin{definition}
A sequence $\left\lbrace (x_n, y_n)\right\rbrace\in H_1\times H_2$ is said to be an {\em approximate sequence} for the SEP $(\ref{SEP})$ if there exists $0<\epsilon_n\rightarrow 0$ such that
\begin{equation}\label{ASEP}
    \begin{cases}
      x_n\in C, \quad y_n\in Q,\\
      \|y_n-Ax_n\|\leq \epsilon_n, \\
      f(x_n, p)\geq -\epsilon_n, \quad\forall  p\in C,\\
      g(y_n, q)\geq -\epsilon_n, \quad\forall q\in Q.
    \end{cases}
\end{equation}
\end{definition}

\begin{definition}
Let $S$ denote the solution set of the SEP (\ref{SEP}). Then we say that the SEP \eqref{SEP} is well-posed if $S$ is a singleton set and every approximating sequence for the SEP \eqref{SEP} converges to the unique solution. We say that the SEP (\ref{SEP}) is generalized well-posed if $S\neq\emptyset$ and every approximating sequence for the SEP \eqref{SEP} has a subsequence which converges to some element of S.
\end{definition}

\noindent Given $\epsilon\geq 0$, define the following set:
\begin{multline}
S(\epsilon) =  \left\{  (x, y)\in C\times Q: \|y-Ax\|\leq \epsilon; \right.\nonumber\\
 \left. {f(x, p)\geq -\epsilon,\quad\forall  p\in C;~ g(y, q)\geq -\epsilon,\quad\forall q\in Q}\right\}.
\end{multline}
Note that $S(\epsilon)$ is called the approximate solution set to the SEP $(\ref{SEP})$.
Note also that $S(\epsilon)\subset S(\epsilon')$ if $0\le\epsilon<\epsilon'$.

\begin{theorem}\label{Thm:1}
The SEP (\ref{SEP}) is well-posed if and only if its solution set $S$ is nonempty and $\text{diam}(S(\epsilon))\rightarrow 0$
as $\epsilon\rightarrow 0$.
\end{theorem}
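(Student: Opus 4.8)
The plan is to prove both directions of the equivalence, treating the "only if" (well-posed $\Rightarrow$ nonempty $S$ and $\mathrm{diam}(S(\epsilon))\to 0$) as the more delicate direction.

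For the **"if" direction**, suppose $S\neq\emptyset$ and $\mathrm{diam}(S(\epsilon))\to 0$ as $\epsilon\to 0$. First I would observe that $S = S(0) \subset S(\epsilon)$ for every $\epsilon>0$, so $\mathrm{diam}(S)\le \mathrm{diam}(S(\epsilon))\to 0$ forces $\mathrm{diam}(S)=0$; combined with $S\neq\emptyset$ this gives that $S$ is a singleton, say $S=\{(x^*,y^*)\}$. Then, given any approximating sequence $\{(x_n,y_n)\}$ with parameters $\epsilon_n\to 0$, the defining inequalities in \eqref{ASEP} say precisely that $(x_n,y_n)\in S(\epsilon_n)$. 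Since both $(x_n,y_n)$ and the solution $(x^*,y^*)$ lie in $S(\epsilon_n)$, we get $\|(x_n,y_n)-(x^*,y^*)\|\le \mathrm{diam}(S(\epsilon_n))\to 0$, so the approximating sequence converges to the unique solution. This establishes well-posedness.

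For the **"only if" direction**, suppose the SEP is well-posed, so $S=\{(x^*,y^*)\}$ is a singleton and hence nonempty; it remains to show $\mathrm{diam}(S(\epsilon))\to 0$. I would argue by contradiction: if not, there exist $\delta>0$, a sequence $\epsilon_n\to 0^+$, and pairs $(x_n,y_n),(x_n',y_n')\in S(\epsilon_n)$ with $\|(x_n,y_n)-(x_n',y_n')\|>\delta$. Now both $\{(x_n,y_n)\}$ and $\{(x_n',y_n')\}$ satisfy the conditions in \eqref{ASEP} with the same $\epsilon_n\to 0$, hence both are approximating sequences for the SEP. By well-posedness each converges to the unique solution $(x^*,y^*)$, so $\|(x_n,y_n)-(x_n',y_n')\|\to 0$, contradicting the lower bound $\delta$. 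Therefore $\mathrm{diam}(S(\epsilon))\to 0$.

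The **main obstacle**, or at least the point demanding the most care, is verifying in the contradiction argument that the two separated selections $(x_n,y_n)$ and $(x_n',y_n')$ genuinely qualify as approximating sequences — i.e. matching the membership in $S(\epsilon_n)$ to the four conditions of \eqref{ASEP} and checking that the $\epsilon_n$ can serve as the vanishing tolerance simultaneously for both sequences. One should also confirm that the supremum defining $\mathrm{diam}(S(\epsilon))$ is genuinely approached by a pair of points rather than only approximated, but since the diameter is a supremum over pairs, choosing the pair to exceed $\delta$ is immediate from the negation of $\mathrm{diam}(S(\epsilon_n))\to 0$. No monotonicity, hemicontinuity, or compactness hypotheses are needed for this metric characterization; the proof is purely set-theoretic and uses only the nesting $S(\epsilon)\subset S(\epsilon')$ and the definitions, so I would not invoke Theorem~\ref{Lem:1} here.
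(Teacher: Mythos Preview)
Your proposal is correct and follows essentially the same argument as the paper's proof: the ``if'' direction uses the inclusion $S\subset S(\epsilon)$ to force $S$ to be a singleton and then bounds the distance from an approximating sequence to the solution by $\mathrm{diam}(S(\epsilon_n))$, while the ``only if'' direction proceeds by the identical contradiction via two separated approximating sequences in $S(\epsilon_n)$. Your remarks about which hypotheses are (not) needed and about the care in matching membership in $S(\epsilon_n)$ to the definition of an approximating sequence are accurate and do not depart from the paper's reasoning.
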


\begin{proof}
Suppose the SEP (\ref{SEP}) is well-posed; thus the solution set $S\neq\emptyset$ by definition. We prove
$\text{diam}(S(\epsilon))\rightarrow 0$ (as $\epsilon\rightarrow 0$) by contradiction.
Suppose that $\text{diam}(S(\epsilon))\nrightarrow 0$ (as $\epsilon\rightarrow 0$). Then there exist $\delta>0$, $0<\epsilon_n\rightarrow 0$,
$(x_n, y_n)\in S(\epsilon_n)$ and $(x_n^\prime, y_n^\prime)\in S(\epsilon_n)$ such that
\begin{align}\label{in1}
\|(x_n, y_n)-(x_n^\prime, y_n^\prime)\|>\delta
\end{align}
for all $n\in\mathbb{N}$.
Since the SEP (\ref{SEP}) is well-posed, $(x_n, y_n)$ and $(x_n^\prime, y_n^\prime)$ both converge to the same unique solution. Thus we have
\begin{align*}
\|(x_n, y_n)-(x_n^\prime, y_n^\prime)\|\rightarrow 0\quad\text{as}\quad n\rightarrow\infty.
\end{align*}
This is a contradiction to $(\ref{in1})$.

Conversely, assume that the solution set $S$ of the SEP (\ref{SEP}) is nonempty and $\text{diam}(S(\epsilon))\rightarrow 0$
as $\epsilon\rightarrow 0$. Since $\emptyset\ne S\subset S(\epsilon)$ for all $\epsilon>0$ and $\text{diam}(S(\epsilon))\rightarrow 0$
as $\epsilon\rightarrow 0$, $S$ must be a singleton set.

Let $\left\lbrace (x_n, y_n)\right\rbrace$ be an approximating sequence. Then there exists
$0<\epsilon_n\rightarrow 0$ such that \eqref{ASEP} holds. Therefore, $(x_n,y_n)\in S(\epsilon_n)$ for all $n\in\mathbb{N}$.
Let $(x^*, y^*)$ be the unique solution to the SEP; thus, $(x^*, y^*)\in S(\epsilon_n)$.
It turns out that
\begin{align*}
\|(x_n, y_n)-(x^*, y^*)\|\leq \text{diam}(S(\epsilon_n))\rightarrow 0.
\end{align*}
This proves that the SEP (\ref{SEP}) is well-posed, and the proof is complete.
\end{proof}

\begin{remark}
The above Theorem \ref{Thm:1} provides the equivalence between the well-posedness of the SEP (\ref{SEP}) and properties of its solution and approximate solution sets.
\end{remark}

Let us consider the following example.
\begin{example}
Let $H_1=H_2=\mathbb{R}$, $A=I$ and $C=Q=\mathbb{R}$.
Let us define $f:C\times C\rightarrow\mathbb{R}$ by
\begin{equation*}
f(x, p)=p^2-x^2
\end{equation*}
and $g:Q\times Q\rightarrow\mathbb{R}$ by
\begin{align*}
g(y, q)=-y^2e^{-q^2}.
\end{align*}

It is clear that $S=\{(0,0)\}\subset C\times Q$ is a unique solution. 

Now, for any $\epsilon>0$, 
\begin{align*}
S(\epsilon)&=\{(x,y)\in\mathbb{R}^2:|y-x|\leq\epsilon;p^2-x^2\geq-\epsilon,\forall p\in\mathbb{R};-y^2e^{-q^2}\geq-\epsilon,\forall q\in\mathbb{R}\}\\&
=\{(x,y)\in\mathbb{R}^2:|y-x|\leq\epsilon;|x|\leq\sqrt{\epsilon};|y|\leq\sqrt{\epsilon}\}\\&
\subset [-\sqrt{\epsilon},\sqrt{\epsilon}]\times[-\sqrt{\epsilon},\sqrt{\epsilon}].
\end{align*}

This imply that $\text{diam}(S(\epsilon))\leq 2\sqrt{2\epsilon}\rightarrow 0$ as $\epsilon\rightarrow 0$. By Theorem \ref{Thm:1}, 
the SEP (\ref{SEP}) is well-posed.
\end{example}

\begin{theorem}\label{Thm:2}
Let  $f:C\times C\rightarrow\mathbb{R}$ and $g:Q\times Q\rightarrow\mathbb{R}$ be two upper semi-continuous bifunctions
in the first variable. Then the SEP \eqref{SEP} is well-posed if and only if
\begin{align}\label{in2}
S(\epsilon)\neq\emptyset \  (\forall \epsilon>0) \quad\text{and} \quad
\lim_{\epsilon\rightarrow 0}\text{diam}(S(\epsilon))=0.
\end{align}
\end{theorem}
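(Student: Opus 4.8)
The plan is to reduce everything to the already-established Theorem \ref{Thm:1}, which characterizes well-posedness as the conjunction of $S\neq\emptyset$ and $\mathrm{diam}(S(\epsilon))\to0$. The only gap between that result and \eqref{in2} is that here the nonemptiness hypothesis is imposed on the approximate solution sets $S(\epsilon)$ rather than on $S$ itself; so the genuine content is to show that, under the upper semicontinuity assumption, the condition ``$S(\epsilon)\neq\emptyset$ for all $\epsilon>0$ together with $\mathrm{diam}(S(\epsilon))\to0$'' forces $S\neq\emptyset$. The forward implication is immediate and does not even use semicontinuity: if the SEP is well-posed, then Theorem \ref{Thm:1} gives $S\neq\emptyset$ and $\mathrm{diam}(S(\epsilon))\to0$, and since $S\subset S(\epsilon)$ for every $\epsilon>0$, each $S(\epsilon)$ is nonempty, whence \eqref{in2} holds.

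For the converse I would manufacture a solution as a limit of approximate solutions. Assuming \eqref{in2}, I pick $(x_n,y_n)\in S(1/n)$ for each $n$. Whenever $m\le n$ we have $(x_n,y_n)\in S(1/n)\subset S(1/m)$ and $(x_m,y_m)\in S(1/m)$, so both points lie in $S(1/m)$; since $\mathrm{diam}(S(1/m))\to0$, the sequence $\{(x_n,y_n)\}$ is Cauchy in $H_1\times H_2$. Completeness yields a limit $(x^*,y^*)$, and the closedness of $C$ and $Q$ gives $x^*\in C$ and $y^*\in Q$. Passing to the limit in $\|y_n-Ax_n\|\le 1/n$ and using continuity of the bounded linear operator $A$ shows $y^*=Ax^*$.

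It then remains to verify the two families of inequalities, and this is the only step where the hypothesis is used — and the main obstacle. Fixing $p\in C$, taking the liminf in $f(x_n,p)\ge -1/n$ gives $\liminf_n f(x_n,p)\ge 0$, while upper semicontinuity of $f$ in the first variable gives $\limsup_n f(x_n,p)\le f(x^*,p)$; together these yield $f(x^*,p)\ge 0$. The identical argument applied to $g(y_n,q)\ge -1/n$ gives $g(y^*,q)\ge 0$ for all $q\in Q$, so $(x^*,y^*)\in S$ and hence $S\neq\emptyset$; Theorem \ref{Thm:1} then delivers well-posedness. The delicate point is precisely this passage to the limit: a limit of approximate solutions need not satisfy the \emph{exact} inequalities without some continuity, and upper semicontinuity in the first variable is exactly what closes that gap, whereas the Cauchy/completeness argument and the reduction to Theorem \ref{Thm:1} are routine.
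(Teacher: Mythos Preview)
Your proof is correct and follows essentially the same approach as the paper: both extract a Cauchy sequence from the shrinking-diameter condition and then use upper semicontinuity in the first variable to pass to the limit in the defining inequalities. The only organizational difference is that the paper works with an \emph{arbitrary} approximating sequence and verifies well-posedness directly, whereas you work with the specific choice $(x_n,y_n)\in S(1/n)$ to obtain $S\neq\emptyset$ and then invoke Theorem~\ref{Thm:1}; the substantive steps are identical.
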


\begin{proof}

From Theorem \ref{Thm:1}, it is clear that we need to prove only the sufficiency part.
Suppose that the condition (\ref{in2}) holds. Since $S\subset S(\epsilon)$ for each $\epsilon>0$, the SEP (\ref{SEP}) admits at most one solution.

Let $(x_n, y_n)$ be an approximate sequence for the SEP (\ref{SEP}). Then there exists a positive sequence $\{\epsilon_n\}$ such that $\epsilon_n\rightarrow 0$ and 
\begin{equation}\label{ASEP}
    \begin{cases}
      x_n\in C, \quad y_n\in Q,\\
      \|y_n-Ax_n\|\leq \epsilon_n ,\\
      f(x_n, p)\geq -\epsilon_n, \quad\forall p\in C,\\
      g(y_n, q)\geq -\epsilon_n, \quad\forall q\in Q.
    \end{cases}
\end{equation}

It is evident that $(x_n, y_n)\in S(\epsilon_n)$. Now
the condition $\lim_{n\to\infty}\text{diam}(S(\epsilon_n))=0$ ensures that $\{(x_n, y_n)\}$ is Cauchy.
Let $(x_n, y_n)\rightarrow (x^*, y^*)$. Since $f$ and $g$ are upper semi-continuous in the first variable,
by taking the limit as $n\to\infty$ in \eqref{ASEP}, we immediately arrive at
\begin{equation*}
    \begin{cases}
      x^*\in C, \quad y^*\in Q,\\
      y^*=Ax^*, \\
      f(x^*, p)\geq 0, \quad\forall p\in C,\\
      g(y^*, q)\geq 0, \quad\forall q\in Q.
    \end{cases}
\end{equation*}
Thus $(x^*, y^*)$ is the unique solution to SEP \eqref{SEP}. This completes the proof.
\end{proof}

\begin{remark}
The above Theorem \ref{Thm:2} not only provides the well-posedness of the SEP (\ref{SEP}), but also its equivalence with the existence and uniqueness
of its solution.
\end{remark}

The following characterization of generalized well-posedness via Hausdorff metric provides the connection between the approximate solution set and the solution set of SEP. The concept of generalized well-posedness allows us to remove the unnecessary uniqueness of the solution to SEP.

\begin{theorem}\label{Thm:3}
The SEP \eqref{SEP} is generalized well-posed if and only if the solution set $S$ is nonempty compact, and
\begin{align}\label{in3}
H(S(\epsilon), S)\rightarrow 0~\text{as}~\epsilon\rightarrow 0.
\end{align}
\end{theorem}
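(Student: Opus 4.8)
The plan is to prove both implications by exploiting the inclusion $S\subset S(\epsilon)$ together with the fact that the approximate sequences for the SEP are precisely the sequences lying in $S(\epsilon_n)$ for some $0<\epsilon_n\to 0$. I would phrase everything on the product space $H_1\times H_2$ so that convergence of pairs $(x_n,y_n)$ is handled uniformly.

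For the necessity part, $S\neq\emptyset$ is immediate from the definition of generalized well-posedness. To establish compactness of $S$, I would take an arbitrary sequence $\{(x_n,y_n)\}\subset S$; since each element is an exact solution, it satisfies \eqref{ASEP} with, say, $\epsilon_n=1/n$, so $\{(x_n,y_n)\}$ is an approximate sequence. Generalized well-posedness then furnishes a subsequence converging to a point of $S$, which proves that $S$ is sequentially compact, hence compact. For the Hausdorff convergence, I would observe that $S\subset S(\epsilon)$ forces $D(S,S(\epsilon))=0$, so $H(S(\epsilon),S)=D(S(\epsilon),S)$, and then argue by contradiction: if $D(S(\epsilon),S)\nrightarrow 0$, there would exist $\delta>0$, $0<\epsilon_n\to 0$ and $(x_n,y_n)\in S(\epsilon_n)$ with $d((x_n,y_n),S)>\delta$; but $\{(x_n,y_n)\}$ is an approximate sequence, so some subsequence converges into $S$, contradicting the distance bound.

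For the sufficiency part, I would let $\{(x_n,y_n)\}$ be any approximate sequence, so that $(x_n,y_n)\in S(\epsilon_n)$ with $0<\epsilon_n\to 0$. The estimate $d((x_n,y_n),S)\le D(S(\epsilon_n),S)\le H(S(\epsilon_n),S)\to 0$ then yields points $(u_n,v_n)\in S$ with $\|(x_n,y_n)-(u_n,v_n)\|\to 0$. Using the compactness of $S$ I would extract a subsequence $\{(u_{n_k},v_{n_k})\}$ converging to some $(x^*,y^*)\in S$, and then conclude that the matching subsequence $\{(x_{n_k},y_{n_k})\}$ converges to the same limit, which establishes generalized well-posedness.

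Apart from this routine compactness bookkeeping, the only delicate point is the final step of the sufficiency part: transferring the convergence of the nearby solutions $(u_{n_k},v_{n_k})$ back to the approximate subsequence $(x_{n_k},y_{n_k})$. Here the compactness of $S$ is indispensable, for without it the distances $d((x_n,y_n),S)$ could tend to zero while no subsequence of $\{(x_n,y_n)\}$ actually converges; I therefore expect this interplay between the Hausdorff estimate and the compactness of $S$ to be the crux of the argument.
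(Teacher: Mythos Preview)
Your proposal is correct and mirrors the paper's proof almost exactly: both directions use the inclusion $S\subset S(\epsilon)$, the compactness of $S$ is obtained by treating a sequence in $S$ as an approximating sequence, the Hausdorff convergence is proved by the same contradiction argument (the paper additionally notes that $H(S(\epsilon),S)$ is monotone in $\epsilon$, but this is inessential), and your sufficiency argument via the auxiliary points $(u_n,v_n)\in S$ is precisely the standard unpacking of the paper's one-line claim that $d((x_n,y_n),S)\to 0$ together with compactness of $S$ yields a convergent subsequence into $S$.
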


\begin{proof}
Suppose that SEP \eqref{SEP} is generalized well-posed. It is clear that $\emptyset\neq S\subset S(\epsilon)$ for all $\epsilon>0$. Then each
sequence $\{(x_n,y_n)\}$ in $S$ is evidently an approximating sequence for the SEP \eqref{SEP}.
Hence, the generalized well-posedness straightforwardly implies that a subsequence of $\{(x_n,y_n)\}$
converges to a point in $S$. This verifies the compactness of $S$. We now turn to prove \eqref{in3}.
Since $S(\epsilon)\supset S$, we have
\begin{align*}
H(S(\epsilon),S)=\sup\{d(z,S): z\in S(\epsilon)\}
\end{align*}
is decreasing in $\epsilon\ge 0$. Let $h:=\lim_{\epsilon\to 0}H(S(\epsilon),S)$. Suppose $h>0$.
Then, for $0<\epsilon_n$ decreasing to zero, we can find $(x_n,y_n)\in S(\epsilon_n)$ such that
\begin{equation}\label{eq:d1}
d((x_n,y_n),S)>\frac12 h, \quad n\ge 1.
\end{equation}
However, the generalized well-posedness yields a subsequence $\{(x_{n_k},y_{n_k})\}$ of $\{(x_n,y_n)\}$
converging to some point $(\hat{x},\hat{y})\in S$, which results from \eqref{eq:d1} in that $0\ge h$.
This contradicts to the assumption $h>0$. Hence, we must have $h=0$.

Conversely, let us assume \eqref{in3} with $S$ being nonempty compact.
To show the generalized well-posedness of the SEP \eqref{SEP}, let $\left\lbrace (x_n, y_n)\right\rbrace$
be an approximating sequence for the SEP \eqref{SEP}.
Then there exists $0<\epsilon_n\rightarrow 0$ such that $(x_n, y_n)\in S(\epsilon_n).$
\begin{align*}
d((x_n, y_n), S)\leq D(S(\epsilon_n), S)\le H(S(\epsilon_n), S)\rightarrow 0.
\end{align*}
Since $S$ is compact, $\left\lbrace (x_n, y_n)\right\rbrace$ has a subsequence converging to some element of $S$.
Hence, the generalized well-posedness of the SEP \eqref{SEP} is proven.
\end{proof}

\begin{example}
Let $H_1=H_2=\mathbb{R}$, $A=I$ and $C=Q=[0, 1]$. Let us define $f:C\times C\rightarrow\mathbb{R}$ by
\begin{equation*}
f(x, p)=
\begin{cases}
x, \quad x\in [0, \frac{1}{2})\\
\frac{x^2}{2}, \quad[\frac{1}{2}, 1].
\end{cases}
\end{equation*}
and $g:Q\times Q\rightarrow\mathbb{R}$ by
\begin{equation*}
g(y, q)=
\begin{cases}
0, \quad y=\frac{1}{2}\\
2, \quad \text{otherwise}.
\end{cases}
\end{equation*}

It is easy to check that $S=\{(x^*,y^*)\in [0,1]\times[0, 1]:y^*=x^*\}$ is nonempty compact. Also, one can verify that $H(S(\epsilon), S)\rightarrow 0$ as $\epsilon\rightarrow 0$.
Therefore, by Theorem \ref{Thm:3}, the SEP (\ref{SEP}) is generalized well-posed (but not well-posed).

\end{example}

We establish the following theorem by assuming upper semi-continuity of $f$ and $g$ and weakening the condition on the solution set $S$
via the Kuratowski measure of noncompactness.

\begin{theorem}\label{Thm:4}
Let $f: C\times C\rightarrow\mathbb{R}$ and $g: Q\times Q\rightarrow\mathbb{R}$ be two upper semi-continuous bifunctions in the first variable.
Then the SEP \eqref{SEP} is generalized well-posed if and only if
\begin{align}\label{in5}
S(\epsilon)\neq\emptyset\quad (\forall\epsilon>0) \quad\text{and} \quad \alpha(S(\epsilon))\rightarrow 0\quad\text{as} \quad\epsilon\rightarrow 0,
\end{align}
where $\alpha$ is Kuratowski's measure of noncompactness.
\end{theorem}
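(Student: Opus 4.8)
The plan is to prove both implications by leaning on the Hausdorff-metric characterization already obtained in Theorem \ref{Thm:3}, which states that generalized well-posedness is equivalent to $S$ being nonempty compact together with $H(S(\epsilon),S)\to 0$ as $\epsilon\to 0$. The necessity direction is then immediate and in fact does not even use the upper semicontinuity hypothesis: assuming generalized well-posedness, Theorem \ref{Thm:3} gives that $S$ is nonempty compact, so $\alpha(S)=0$, and that $H(S(\epsilon),S)\to 0$. Since $\emptyset\neq S\subset S(\epsilon)$ we have $S(\epsilon)\neq\emptyset$ for every $\epsilon>0$, and the preceding lemma bounding the Kuratowski measure by the Hausdorff metric yields $\alpha(S(\epsilon))\le 2H(S(\epsilon),S)+\alpha(S)=2H(S(\epsilon),S)\to 0$.

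For the sufficiency direction I would first establish the structural fact that each $S(\epsilon)$ is closed. Taking $(x_n,y_n)\in S(\epsilon)$ with $(x_n,y_n)\to(x,y)$, closedness of $C$ and $Q$ gives $x\in C$ and $y\in Q$, while continuity of $A$ and of the norm gives $\|y-Ax\|\le\epsilon$; for the bifunction constraints, upper semicontinuity of $f$ in the first variable gives $f(x,p)\ge\limsup_n f(x_n,p)\ge -\epsilon$ for every $p\in C$, where the last inequality holds because each $f(x_n,p)\ge-\epsilon$ forces $\limsup_n f(x_n,p)\ge-\epsilon$, and the same reasoning applies to $g$. Hence $(x,y)\in S(\epsilon)$. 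This is precisely the step where the upper semicontinuity assumption is indispensable, and I expect it to be the main technical point. Alongside this I would record that $\{S(\epsilon)\}$ is nested, decreasing as $\epsilon\downarrow 0$, and that $\bigcap_{\epsilon>0}S(\epsilon)=S$, the latter obtained by letting $\epsilon\to0$ in the three defining inequalities.

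With closedness in hand, the core of the argument is the generalized Cantor (Kuratowski) intersection theorem. Fixing any sequence $\epsilon_n\downarrow 0$, the sets $S(\epsilon_n)$ form a decreasing sequence of nonempty closed subsets of the complete space $H_1\times H_2$ with $\alpha(S(\epsilon_n))\to 0$ by hypothesis. Kuratowski's theorem then asserts that $S=\bigcap_n S(\epsilon_n)$ is nonempty and compact and that $H(S(\epsilon_n),S)\to 0$. Since $\epsilon\mapsto H(S(\epsilon),S)$ is monotone, a consequence of the nesting $S(\epsilon)\subset S(\epsilon')$ for $\epsilon<\epsilon'$, convergence along one sequence $\epsilon_n\downarrow 0$ upgrades to $H(S(\epsilon),S)\to 0$ as $\epsilon\to 0$. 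At this point $S$ is nonempty compact and $H(S(\epsilon),S)\to 0$, so Theorem \ref{Thm:3} delivers the generalized well-posedness of the SEP \eqref{SEP}, completing the proof.
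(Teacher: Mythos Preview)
Your proof is correct and follows essentially the same route as the paper: both directions are reduced to the Hausdorff-metric characterization of Theorem~\ref{Thm:3}, with necessity coming from the inequality $\alpha(S(\epsilon))\le 2H(S(\epsilon),S)+\alpha(S)$ and sufficiency from the closedness of $S(\epsilon)$ (via the upper semicontinuity hypothesis), the identity $S=\bigcap_{\epsilon>0}S(\epsilon)$, and Kuratowski's intersection theorem. Your write-up simply spells out in more detail steps the paper leaves implicit, such as the verification of closedness and the monotonicity argument passing from sequential to full convergence of $H(S(\epsilon),S)$.
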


\begin{proof}
Suppose that the SEP \eqref{SEP} is generalized well-posed. Thus $S(\epsilon)\supset S\neq\emptyset$ for all $\epsilon>0$.
By Theorem \ref{Thm:3}, we know that $S$ is compact (thus $\alpha(S)=0$) and
\begin{align}\label{in6}
H(S(\epsilon), S)\rightarrow 0~\text{as}~\epsilon\rightarrow 0.
\end{align}
It follows that
\begin{align*}
\alpha(S(\epsilon))\leq 2H(S(\epsilon), S)+\alpha(S)=2H(S(\epsilon), S).
\end{align*}
This together with $(\ref{in6})$ implies that
$\alpha(S(\epsilon))\rightarrow 0$ as $\epsilon\rightarrow 0.$

Conversely, let us assume \eqref{in5} and prove that the SEP \eqref{SEP} is generalized well-posed.
Since $f$ and $g$ are both upper semi-continuous in the first variable, the set $S(\epsilon)$ is closed for all $\epsilon>0.$ It is easy to verify that
\begin{align}
S=\cap_{\epsilon>0}S(\epsilon).\nonumber
\end{align}
Since
$\alpha(S(\epsilon))\rightarrow 0$, by Theorem 1 \cite{CHOR1985} (or, one can see, \cite[pp. 412]{KKUR1968}),  $S$ is nonempty compact and
\begin{align*}
H(S(\epsilon), S)\rightarrow 0~\text{as}~\epsilon\rightarrow 0.\nonumber
\end{align*}
Therefore, by Theorem \ref{Thm:3}, the SEP \eqref{SEP} is generalized well-posed.
\end{proof}

\begin{remark}
The above Theorem \ref{Thm:4} shows that the generalized well-posedness of the problem (\ref{SEP}) is related to the compactness of the approximate solution set.
\end{remark}

\section{Well-Posedness and Uniqueness of Solution}
\label{Sec:4}

The most interesting problems in the theory of well-posedness for variational problems is to draw the equivalence between the well-posedness and uniqueness of the solution. In this section, we shall prove that the well-posedness of the SEP (\ref{SEP}) is equivalent to the existence and uniqueness of its solution under mild conditions.

\begin{theorem}\label{Thm:5}
Let $C$ and $Q$ be nonempty closed convex subsets of finite-dimensional real Hilbert spaces $H_1$ and $H_2$, respectively.
Let $f:C\times C\rightarrow\mathbb{R}$ and $g:Q\times Q\rightarrow\mathbb{R}$ be hemicontinuous bifunctions which satisfy the following conditions:
\begin{itemize}
\item[(i)]  $f$ and $g$ are both monotone.
\item[(ii)] $f$ and $g$ are both convex in the second variable.
\item[(iii)] $f$ and $g$ are both lower semi-continuous in the second variable.
\item[(iv)] $f(p, p)$ and $g(q, q)$ are nonnegative for every $(p, q)\in C\times Q.$
\end{itemize}
Then the SEP \eqref{SEP} is well-posed if and only if the SEP \eqref{SEP} has a unique solution.
\end{theorem}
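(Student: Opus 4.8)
The necessity is immediate: by the very definition of well-posedness the solution set $S$ is a singleton, so a well-posed SEP has a unique solution. The entire content is therefore the converse, and the plan is to fix the unique solution $(x^*,y^*)$ (so that $y^*=Ax^*$) and to show that an arbitrary approximating sequence $\{(x_n,y_n)\}$, with associated $0<\epsilon_n\to 0$, must converge to $(x^*,y^*)$. Two observations will be used repeatedly. First, monotonicity reverses the approximate inequality: from $f(x_n,p)\ge-\epsilon_n$ together with $f(x_n,p)+f(p,x_n)\le 0$ we get $f(p,x_n)\le\epsilon_n$ for all $p\in C$, and likewise $g(q,y_n)\le\epsilon_n$ for all $q\in Q$. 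Second, Theorem \ref{Lem:1} (applicable thanks to hypotheses (i), (ii), (iv) and hemicontinuity) lets us pass between the two forms of an equilibrium inequality; combined with lower semicontinuity in the second variable (iii) it is exactly what converts a limiting ``dual'' inequality into a genuine solution.

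I would first show that every convergent subsequence of $\{(x_n,y_n)\}$ has $(x^*,y^*)$ as its limit. Suppose $(x_{n_k},y_{n_k})\to(\bar x,\bar y)$, where $\bar x\in C$, $\bar y\in Q$ since both sets are closed. Continuity of $A$ together with $\|y_n-Ax_n\|\le\epsilon_n\to 0$ forces $\bar y=A\bar x$. From $f(p,x_{n_k})\le\epsilon_{n_k}$ and the lower semicontinuity of $f(p,\cdot)$ I obtain $f(p,\bar x)\le\liminf_k f(p,x_{n_k})\le 0$ for every $p\in C$; Theorem \ref{Lem:1} then yields $f(\bar x,p)\ge 0$ for all $p\in C$. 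The identical argument applied to $g$ gives $g(\bar y,q)\ge 0$ for all $q\in Q$. Hence $(\bar x,\bar y)$ solves the SEP \eqref{SEP}, and by uniqueness $(\bar x,\bar y)=(x^*,y^*)$.

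The main obstacle is boundedness of $\{(x_n,y_n)\}$, since only then does finite-dimensionality supply convergent subsequences. I plan to establish it by a rescaling (recession-type) argument that exploits uniqueness of the \emph{joint} solution. Arguing by contradiction, assume $r_n:=\|(x_n,y_n)-(x^*,y^*)\|\to\infty$ along a subsequence, set $t_n:=1/r_n\to 0$, and form the convex combinations $u_n:=(1-t_n)x^*+t_nx_n\in C$ and $v_n:=(1-t_n)y^*+t_ny_n\in Q$, so that $\|(u_n,v_n)-(x^*,y^*)\|=1$. A convergent subsequence produces a limit $(\bar u,\bar v)\ne(x^*,y^*)$. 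Because $y^*=Ax^*$, we have $v_n-Au_n=t_n(y_n-Ax_n)$, whence $\|v_n-Au_n\|\le t_n\epsilon_n\to0$ and $\bar v=A\bar u$. For the equilibrium part I use convexity in the second variable: since $x^*$ solves the first problem, Theorem \ref{Lem:1} gives $f(p,x^*)\le 0$, and therefore
\[
f(p,u_n)\le(1-t_n)f(p,x^*)+t_nf(p,x_n)\le t_n\epsilon_n\to 0
\]
for each $p\in C$; lower semicontinuity gives $f(p,\bar u)\le 0$, and Theorem \ref{Lem:1} again shows $\bar u$ solves the first equilibrium problem, while symmetrically $\bar v$ solves the second. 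Thus $(\bar u,\bar v)$ is a second solution of the SEP, contradicting uniqueness; hence $\{(x_n,y_n)\}$ is bounded.

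Finally I would assemble the pieces: the sequence is bounded, so in the finite-dimensional product $H_1\times H_2$ every subsequence admits a convergent sub-subsequence, whose limit equals $(x^*,y^*)$ by the second paragraph. A standard argument then forces the whole sequence to converge to $(x^*,y^*)$, which is precisely well-posedness. I expect the only delicate point to be the boundedness step; the key realization there is that the rescaled limit automatically inherits the split constraint $\bar v=A\bar u$ and solves both equilibrium problems, so it is the \emph{joint} uniqueness of the SEP solution---rather than uniqueness for the individual equilibrium problems, which may genuinely fail---that produces the contradiction.
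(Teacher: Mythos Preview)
Your proposal is correct and follows essentially the same approach as the paper: both use monotonicity to pass to the dual inequalities $f(p,x_n)\le\epsilon_n$, establish boundedness via the identical rescaling $t_n=1/\|(x_n,y_n)-(x^*,y^*)\|$ together with convexity and lower semicontinuity in the second variable, and invoke Theorem~\ref{Lem:1} plus joint uniqueness to reach a contradiction. The only cosmetic difference is ordering: you isolate the ``every subsequential limit is $(x^*,y^*)$'' step first, whereas the paper proves boundedness first and then treats subsequential limits---but the ingredients and logic are the same.
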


\begin{proof}
Since the necessary part of the above theorem is trivial, let us prove only its sufficient part.
Suppose that $(x^*, y^*)$ is the unique solution to the SEP \eqref{SEP}. Then we have the following
\begin{equation}\label{in7}
\begin{cases}
    x^*\in C, \quad y^*=Ax^*\in Q,\\
    f(x^*, p)\geq 0, \quad\forall p\in C,\\
    g(y^*, q)\geq 0, \quad\forall q\in Q.
\end{cases}
\end{equation}
Since $f$ and $g$ are monotone, we get from $(\ref{in7})$ that
\begin{equation}\label{in8}
\begin{cases}
    f(p, x^*)\leq-f(x^*, p)\leq 0, \quad\forall p\in C,\\
    g(q, y^*)\leq-g(y^*, q)\leq 0, \quad\forall q\in Q.
\end{cases}
\end{equation}

Let $\left\lbrace(x_n, y_n)\right\rbrace$ be an approximating sequence of the SEP \eqref{SEP}. Then there exists
$0<\epsilon_n\rightarrow 0$ such that
\begin{equation}\label{ASEP4}
\begin{cases}
      x_n\in C, \quad y_n\in Q,\\
      \|y_n-Ax_n\|\leq \epsilon_n ,\\
      f(x_n, p)\geq -\epsilon_n, \quad\forall p\in C,\\
      g(y_n, q)\geq -\epsilon_n, \quad\forall q\in Q.
\end{cases}
\end{equation}
Therefore, using the monotonicity of $f$ and $g$ from $(\ref{ASEP4})$, we have the following
\begin{equation}\label{ASEP5}
\begin{cases}
      x_n\in C, \quad y_n\in Q,\\
      \|y_n-Ax_n\|\leq \epsilon_n, \\
      f(p, x_n)\leq -f(x_n, p)\leq \epsilon_n, \quad\forall p\in C,\\
      g(q, y_n)\leq- g(y_n, q)\leq \epsilon_n, \quad\forall q\in Q.
\end{cases}
\end{equation}

Now we have to show that $\left\lbrace(x_n, y_n)\right\rbrace$ is bounded. Suppose that $\left\lbrace(x_n, y_n)\right\rbrace$ is not bounded. With no loss of generality, we can assume that $\|(x_n, y_n)\|\rightarrow \infty.$ Set $u^*=(x^*, y^*)$, $u_n=(x_n, y_n)$ and
\begin{align*}
t_n&=\frac{1}{\|(x_n, y_n)-(x^*, y^*)\|}=\frac{1}{\|u_n-u^*\|},\\
v_n&=(z_n, w_n)=u^*+t_n(u_n-u^*)=((1-t_n)x^*+t_n x_n, (1-t_n)y^*+t_n y_n).
\end{align*}

We may assume that $t_n\in(0, 1]$ and $v_n\rightarrow\bar{v}=(\bar{z}, \bar{w})\neq u^*$ because $H_1$ and $H_2$ are finite-dimensional.

Since $f$ is convex and lower semi-continuous in the second variable, we get
\begin{align}\label{in9}
f(p, \bar{z})&\leq\liminf_{n\rightarrow\infty} f(p, z_n)\\&
\leq\liminf_{n\rightarrow\infty} f(p, (1-t_n)x^*+t_n x_n)\nonumber\\&
\leq \liminf_{n\rightarrow\infty}[(1-t_n)f(p, x^*)+ t_n f(p, x_n)]\nonumber\\&
\leq\liminf_{n\rightarrow\infty}~t_n\epsilon_n=0,\quad \forall p\in C.\nonumber
\end{align}
Similarly, since $g$ is convex and lower semi-continuous in the second variable, we have
\begin{align}\label{in9}
g(q, \bar{w})&\leq\liminf_{n\rightarrow\infty} g(q, w_n)\\&
\leq\liminf_{n\rightarrow\infty} g(q, (1-t_n) y^*+t_n y_n)\nonumber\\&
\leq \liminf_{n\rightarrow\infty}[(1-t_n)g(q, y^*)+ t_n g(q, y_n)]\nonumber\\&
\leq \liminf_{n\rightarrow\infty}~t_n\epsilon_n=0,\quad\forall q\in Q.\nonumber
\end{align}
Further, we also have
\begin{align}
\|A(z_n)-w_n\|&=\|A((1-t_n)x^*+t_n x_n)-(1-t_n)y^*-t_n y_n\|\\
&=t_n\|Ax_n-y_n\|\leq t_n\epsilon_n\rightarrow 0\nonumber.
\end{align}

Since $z_n\rightarrow\bar{z}$ and $w_n\rightarrow\bar{w}$, $A(\bar{z})=\bar{w}$. Using Theorem \ref{Lem:1} and uniqueness of the solution to SEP it is easy to show that $(\bar{z}, \bar{w})=(x^*, y^*)=u^*,$ which is a contradiction.
Thus $\left\lbrace u_n\right\rbrace$ is bounded.

Let $\left\lbrace u_{n_k}\right\rbrace$ be any subsequence of $\left\lbrace u_n\right\rbrace$. Since $\left\lbrace u_{n}\right\rbrace$ is bounded, $\left\lbrace u_{n_k}\right\rbrace$ has a convergent subsequence $\left\lbrace u_{n_{k_l}}\right\rbrace$ with limit $\bar{u}=(\bar{x}, \bar{y}).$ By our assumptions, it is easy to show that $\bar{u}=(\bar{x}, \bar{y})$ is a solution to the SEP (\ref{SEP}). Since $(x^*, y^*)=u^*$ is the unique solution to the SEP (\ref{SEP}), we get $\bar{u}=u^*$ and thus the approximating sequence $\left\lbrace(x_n, y_n)\right\rbrace$ converges to the unique solution of the SEP (\ref{SEP}). Therefore, the SEP (\ref{SEP}) is well-posed.
\end{proof}

The following example illustrates our Theorem $\ref{Thm:5}$.
\begin{example}
Let $H_1=H_2=\mathbb{R}$, $A=I$, $C=Q=[0, +\infty)$, $f(x, p)=p^2-x^2$ and $g(y, q)=q-y$. 
It is easy to check that the bifunctions $f$ and $g$  satisfy all the conditions of Theorem $\ref{Thm:5}$. 
Also, one can show that the point $(0, 0)$ is a unique solution of the SEP (\ref{SEP}).  
Therefore, by Theorem $\ref{Thm:5}$, the SEP (\ref{SEP}) is well-posed.
\end{example}


\section{Conclusion}\label{Sec:5}
In this paper we introduced the concept of well-posedness to the split equilibrium problem in infinite-dimensional real Hilbert spaces and Furi-Vignoli type of characterizations have been established for the well-posedness. We also showed that the well-posedness of the SEP (\ref{SEP}) is equivalent to the existence and uniqueness of its solution in finite-dimensional real Hilbert spaces. In addition, we provided some nontrivial examples to illustrate our theoretical results. It is interesting to ask the following question: 

\noindent Does Theorem \ref{Thm:5} hold in the infinite-dimensional Hilbert spaces?

\noindent The answer to this question remains open.
  

\section{Acknowledgments} 
The authors are thankful to the editor and to two anonymous referees for their useful comments and helpful suggestions.

\section{ Declarations}

\subsection {\bf Conflicts of interest}

There is no conflict of interest.
 
\subsection{\bf  Data availability statement}

The authors acknowledge that the data presented in this study must be deposited and made publicly
available in an acceptable repository, prior to publication.



\end{document}